\documentclass{amsart}
\usepackage{amsfonts,latexsym,graphicx,amssymb,amsmath,url,amsthm}
\newtheorem{theorem}{Theorem}
\newtheorem{lemma}{Lemma}

\newtheorem{conjecture*}{Conjecture}
\theoremstyle{definition}
\theoremstyle{remark}

\newcommand{\N}{\mathbb N}
\newcommand{\Z}{\mathbb Z}

\begin{document}

\title{Two dimensional arithmetic progressions avoiding squares}
\iftrue
\author[R. Dietmann] {Rainer Dietmann}
\address{Department of Mathematics, Royal Holloway,
University of London, Egham, Surrey, TW20 0EX, United Kingdom}
\email{rainer.dietmann@rhul.ac.uk}
\author[C. Elsholtz]{Christian Elsholtz}
\address{Institute of Analysis and Number Theory,
Graz University of Technology,
Kopernikusgasse 24/II, Graz,
A-8010 Graz, Austria}
\email{elsholtz@math.tugraz.at}
\thanks{Research of the second author was partially supported by the  joint FWF-ANR project ArithRand (I 4945-N and ANR-20-CE91-0006).}

\fi
\begin{abstract}
We show that any proper symmetric two dimensional arithmetic
progression contained in the interval $[-T,T]$ which avoids
non-zero perfect squares has at most
$O_\varepsilon(T^{20/27+\varepsilon})$ elements. This improves
on a result of Croot, Lyall and Rice. We also discuss
lower bounds for this problem and their connections to
bounds for the least quadratic non-residue modulo a prime.
\end{abstract}
\subjclass[2020]{Primary: 11B25, Secondary: 11D09, 11P70}
\maketitle
\section{Introduction}

There are many questions related to sumsets and squares, or arithmetic progressions and squares.
See for example Bombieri, Granville and Pintz \cite{Bombieri-Granville-Pintz}, Bombieri and Zannier \cite{Bombieri-Zannier} or Dujella and
\iftrue
the second author 
\fi
\cite{Dujella-Elsholtz}, Gyarmati \cite{Gyarmati}, Lagarias, Odlyzko and Shearer \cite{Lagarias-etal}, Khalfallah, Lodha and Szemer\'{e}di \cite{Khalfallah-etal}, Chase \cite{Chase}, Schlage-Puchta \cite{Schlage-Puchta:2010},
\iftrue
the second author
\fi
and Wurzinger \cite{EW},
\iftrue
the second author
\fi
and Ruzsa and Wurzinger \cite{EW2}.

Croot, Lyall and Rice \cite{Croot-Lyall-Rice} raised a new type of question, after briefly discussing the
one-dimensional case (see Lemma \ref{losteria}):
Let $q_1, q_2$ be positive integers, where $q_2$ is prime, let $X_1>0, X_2>0$, and suppose that the following symmetric
two-dimensional arithmetic progression
\[
  {\mathcal A}_{q_1,q_2}(X_1,X_2)=\{ x_1 q_1+x_2 q_2\mid x_1, x_2 \in \Z,
  |x_1|\leq X_1, |x_2|\leq X_2\}\subseteq[-T,T]
\]
does not contain any non-zero perfect square and is proper, i.e. each element in ${\mathcal A}_{q_1,q_2}(X_1,X_2)$ is
represented uniquely, so $|{\mathcal A}_{q_1,q_2}(X_1,X_2)|
= (2 \lfloor X_1 \rfloor+1)(2 \lfloor X_2 \rfloor + 1)$. Also assume that $q_2 X_2 \geq q_1 X_1$.
Can we give a non-trivial upper bound on 
$|{\mathcal A}_{q_1,q_2}(X_1,X_2)|$, as a function of $T$?
Their paper answers this with
\[
  |{\mathcal A}_{q_1,q_2}(X_1,X_2)| \ll T^{5/6}(\log T)^{1/3}.
\]
They also considerably generalized their setup, allowing
for higher dimensional arithmetic progressions and intersective
polynomials instead of squares,
so that the result above was their first `model case'. They give heuristic reasons that
`properness' should not be a strong hypothesis.

In this paper we concentrate on this one case and aim to get
a bound as sharp as possible. Whereas Croot, Lyall and Rice
rely on Fourier analysis and Weyl's inequality, we follow
a more elementary approach inspired by Zaharescu's work (see \cite{Zaharescu})
and based on Burgess type bounds for character sums.
It performs better in this specific situation but is
unlikely to generalize to the more general settings considered by Croot, Lyall and Rice.

\begin{theorem}
\label{egham}
Let $q_1, q_2$ be coprime positive integers, let $X_1, X_2>0$, and suppose that
\[
  {\mathcal A}_{q_1,q_2}(X_1,X_2)=\{ x_1 q_1+x_2 q_2\mid x_1, x_2 \in \Z,
  |x_1|\leq X_1, |x_2|\leq X_2\}\subseteq[-T,T]
\]
does not contain a non-zero perfect square. Then
\[
  |{\mathcal A}_{q_1,q_2}(X_1,X_2)| \ll_\varepsilon T^{20/27+\varepsilon}.
\]
The implied $O$-constant only depends on $\varepsilon$.
(Note that $20/27= 0.74074\ldots)$. \end{theorem}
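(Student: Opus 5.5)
Throughout, by symmetry in the roles of $(q_1,X_1)$ and $(q_2,X_2)$, I assume $q_1X_1\le q_2X_2$, so $q_2X_2\asymp T$ up to constants. Let $N=|\mathcal A_{q_1,q_2}(X_1,X_2)|\ll X_1X_2$. The trivial bounds $X_2\le T/q_2$ and $X_1\le T/q_1$ dispose of the cases where $q_1$, $q_2$ or $X_1$ is extreme. So the interesting range is where $X_1$ and $X_2$ are both large, and there I aim to produce a square in the progression by a counting argument.

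The translation is as follows. An integer $m=x_1q_1+x_2q_2$ with $|x_1|\le X_1$, $|x_2|\le X_2$ is a square $y^2$ exactly when $y^2\equiv x_1q_1 \pmod{q_2}$ and $|y^2-x_1q_1|\le X_2q_2$. I restrict to $x_1\in[X_1/2,X_1]$, so that $x_1q_1>0$. I also restrict $y$ to the dyadic range $y\asymp \sqrt{x_1q_1}$, or to $y\le \sqrt{X_2q_2}$ when $x_1q_1\le X_2q_2$, so that the window condition holds automatically. With these restrictions it suffices to show that
\[
S=\#\{(x_1,y): x_1\in I,\ y\in J,\ y^2\equiv x_1q_1 \pmod{q_2}\}
\]
is positive. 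Here $I$ is an interval of length $\asymp X_1$, and $J$ is an interval of length $H\asymp \min(\sqrt{T},\, X_2q_2/\sqrt{T})$ inside the admissible $y$-range. Since $\gcd(q_1,q_2)=1$, the congruence reads $x_1\equiv \bar q_1y^2 \pmod{q_2}$.

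I will detect it using the characters mod $q_2$, or equivalently by counting $y\in J$ such that $\bar q_1 y^2 \bmod q_2$ lands in $I$. First I pass to primitive residues, losing only $T^{\varepsilon}$ factors through a divisor sum over $\gcd(y,q_2)$. Then I write the count as $\frac{|I|\,|J|}{q_2}$ (the main term, up to $T^{\varepsilon}$) plus an error. I expand the indicator of the quadratic residue condition on $x_1$ as $\sum_{\chi^2=\chi_0}\chi(x_1\bar q_1)$ and the condition $x_1\equiv \bar q_1y^2$ via additive or multiplicative characters. The error terms are then bounded by Burgess' bound
\[
\sum_{x\in I}\chi(x)\ll |I|^{1-1/r}q_2^{(r+1)/(4r^2)+\varepsilon}
\]
for the quadratic characters (I take $q_2$ cubefree, or apply Burgess to the primitive inducing character in general), and by P\'olya--Vinogradov/Weil-type bounds for the incomplete sums over $y\in J$. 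Requiring the main term to dominate gives a condition of the form $X_1^{a}H^{b}\gg q_2^{c+\varepsilon}$. Combined with $q_2X_2\asymp T$, $q_1X_1\le T$, and $N\asymp X_1X_2$, this yields $N\ll T^{20/27+\varepsilon}$ after optimising over $r$ and over the free parameters $q_1,q_2$. I expect the extremal configuration to be the one balancing the Burgess loss with the length $H$ of the $y$-interval, which is where $20/27$ should come out. Before the final optimisation I will check the complementary cases separately: $X_1$ below the Burgess range, and $q_2\le T^{1/2}$, where reduction mod $q_1$ instead, or the one-dimensional Lemma \ref{losteria}, applies.

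The main obstacle is the character sum estimate for general (not necessarily prime or cubefree) $q_2$, where the quadratic residue condition mod $q_2$ splits over prime powers. There Burgess in the form needed is only available for cubefree moduli, or with weaker exponents. My first attempt would be to write $q_2=q'q''$ with $q'$ cubefree and $q''$ squarefull. On the squarefull part I would fix the residue class of $y$ modulo $q''$; this class set is small because $y^2\equiv a \pmod{q''}$ has $\ll q''^{\varepsilon}\cdot(\text{few})$ solutions, and fixing it shortens the intervals by $q''$. I would then verify that the resulting loss is absorbed, using the slack in the exponent when $q''$ is large, since then $X_2\le T/q_2$ is already small enough. Getting the exponents to line up exactly at $20/27$ across all cases is the delicate part.
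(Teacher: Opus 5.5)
There is a genuine gap. The decisive step (``requiring the main term to dominate gives $X_1^aH^b\gg q_2^{c+\varepsilon}$, which yields $T^{20/27+\varepsilon}$'') is never carried out, and with the tools you name it does not work.

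In your count $S$, Burgess only controls $\sum_{x_1\in I}\chi(x_1)$ for quadratic $\chi$. That shows many $x_1\in I$ make $\bar q_1x_1$ a quadratic residue mod $q_2$, but this is not the bottleneck. The bottleneck is that a square root of $\bar q_1x_1$ must land in the short interval $J$, of length $\asymp\sqrt T$. That is an equidistribution statement for $\bar q_1y^2 \bmod q_2$, $y\in J$, in intervals of length $X_1$, and its error term comes from incomplete Gauss sums. With the completion (P\'olya--Vinogradov/Weil) bound you propose, that error is $q_2^{1/2+\varepsilon}$. Positivity then requires $X_1\sqrt T/q_2\gg q_2^{1/2+\varepsilon}$. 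Combined with $X_2\ll T/q_2$ and $X_1\ll T^{1/2}$ (Lemma~\ref{laber}), this gives only $X_1X_2\ll T^{5/6+\varepsilon}$, essentially the Croot--Lyall--Rice bound. Detecting the congruence multiplicatively instead brings in non-real $\chi$ (and $\chi^2$ on $J$), where Burgess on one factor plus mean values on the other is worse still.

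Take the configuration $q_1=T^{16/27}$, $q_2=T^{2/3}$, $X_1=T^{11/27}$, $X_2=T^{1/3}$, at which the paper's case analysis is tight. Your main term there is $X_1\sqrt T/q_2=T^{13/54}$ against an error bound $q_2^{1/2}=T^{1/3}$. Counting modulo $q_1$ fails the same way: $T^{13/54}$ against $T^{8/27}$. Also, the obstacle you single out is not the real one: primitive quadratic characters have conductor squarefree apart from a factor dividing $8$, so Burgess applies to them for any modulus.

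The missing idea is to stop counting all pairs and instead \emph{construct} a square of special multiplicative shape, so that existence follows from a lattice-point argument with no error term at all. Burgess enters only through $H(k)\ll k^{1/4+\varepsilon}$: every reduced residue mod $k$ is $yz^2$ with $|y|\le H(k)$. This gives $b$ with $|b|\ll q_1^{1/4+\varepsilon}$ and $bq_2\equiv c^2\pmod{q_1}$. Taking $x_2=bd^2$, the condition $n^2\equiv x_2q_2\pmod{q_1}$ follows from the homogeneous congruence $n\equiv cd\pmod{q_1}$. Dirichlet's theorem solves that with $1\le n\le N$ and $|d|\le q_1/N$, which is Lemma~\ref{sirene}: $|x_1|\ll N^2/q_1+q_1^{5/4+\varepsilon}q_2/N^2$ and $|x_2|\ll q_1^{9/4+\varepsilon}/N^2$.

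The exponent $20/27$ then comes from a case analysis combining four ingredients: this construction modulo $q_1$ (Lemma~\ref{dachterrasse}), the same construction modulo $q_2$ (Lemma~\ref{bodensee}), Zaharescu's Lemma~\ref{graz} (via Lemma~\ref{denmark}), and the trivial bounds $X_i\ll T/q_i$. For instance, $N=q_1^{9/16}q_2^{1/4}$ gives Lemma~\ref{schwarzenegger}, $|\mathcal A_{q_1,q_2}(X_1,X_2)|\ll q_1^{1/8+\varepsilon}T/q_2^{1/2}$. This is $\le T^{20/27+\varepsilon}$ whenever $q_1\le T^{16/27}$ and $q_2\ge T^{2/3}$. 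Your proposal has neither the special shape $x_2=bd^2$ nor the two-sided case analysis, and nothing in your counting framework substitutes for them.
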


The assumptions in Theorem \ref{egham} are slightly different from the one in the
result by Croot, Lyall and Rice mentioned above, but the
following shows that our method also works in their setting: instead of assuming coprimality of $q_1$ and $q_2$ we can
alternatively assume properness of the two-dimensional arithmetic
progression.
\begin{theorem}
\label{burger}
Let $q_1, q_2$ be positive integers, let $X_1,X_2>0$, and
let $\mathcal{A}_{q_1,q_2}(X_1,X_2) \subset [-T,T]$ be a \emph{proper}
symmetric two dimensional  arithmetic progression which does not contain a non-zero perfect square. Then
\[
  |\mathcal{A}_{q_1,q_2}(X_1,X_2)| \ll_\varepsilon T^{20/27+\varepsilon}.
\]
The implied $O$-constant only depends on $\varepsilon$.
\end{theorem}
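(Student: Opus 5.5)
The plan is to reduce Theorem \ref{burger} to Theorem \ref{egham}. Write $d=\gcd(q_1,q_2)$, $q_i=d q_i'$, so that $\mathcal{A}_{q_1,q_2}(X_1,X_2)=d\cdot \mathcal{A}_{q_1',q_2'}(X_1,X_2)$ with $\gcd(q_1',q_2')=1$. The obstruction to quoting Theorem \ref{egham} directly is that $d\,a$ avoiding squares does not force $a$ to avoid squares. Properness, however, forces $d$ to be small relative to the box. If $d>1$, then $x_1=q_2'$, $x_2=-q_1'$ gives $x_1q_1+x_2q_2=0$, a nontrivial representation of $0$. Hence properness forces either $q_2'>2X_1$ or $q_1'>2X_2$, since otherwise two distinct representations of $0$ (and of any element) exist within the box. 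The same holds for $d=1$, so properness in general says that the box does not contain the kernel vector $(q_2',-q_1')$, up to the factor $2$ coming from differences.

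Rather than passing to a rescaled progression, I would rerun the argument of Theorem \ref{egham} with squares replaced by the condition ``$d\,a$ is a square''. Write $d=d_0 s^2$ with $d_0$ squarefree. Then $d a$ is a square iff $a=d_0 t^2$. The character-sum (Burgess/Zaharescu type) argument behind Theorem \ref{egham} detects squares through quadratic characters modulo primes dividing $q_1'$ or $q_2'$. Detecting $d_0 t^2$ instead just twists by the fixed value $\chi(d_0)$, which does not affect the size of the Burgess bounds. The fixed twist can also be absorbed by restricting $x_1,x_2$ to a sub-box in which $a$ takes values in a fixed residue class, e.g. making $d_0\mid a$, at a cost that must be controlled.

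Concretely, I would proceed in three steps.
\begin{enumerate}
\item Reduce to $d_0\mid a$ by passing to the sublattice $\{(x_1,x_2): d_0 \mid x_1q_1'+x_2q_2'\}$. This is a two-dimensional lattice of index $d_0$, and its intersection with the box contains a proper two-dimensional progression of size $\gg |\mathcal{A}|/d_0$ by a basis-reduction argument.
\item Divide by $d_0$ to obtain a progression $\mathcal{A}'\subset[-T/(d d_0)\cdot d_0,\dots]$ avoiding the set $d_0\cdot\{\text{squares}\}$. Equivalently, $d\,d_0\,\mathcal{A}'$ avoids squares with $d\,d_0$ a perfect square times $s^2$, so $\mathcal{A}'$ itself avoids squares and lies in $[-T/d,T/d]$.
\item Apply Theorem \ref{egham}, which requires coprimality of the new generators; this can again be arranged by the same reduction. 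This gives $|\mathcal{A}|/d_0\ll (T/d)^{20/27+\varepsilon}$, i.e.\ $|\mathcal{A}|\ll d_0 d^{-20/27}T^{20/27+\varepsilon}$.
\end{enumerate}

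The main obstacle is that $d_0$ may exceed $d^{20/27}$, in which case the bound of step 3 is too weak. For $d_0$ large I would instead use properness directly. Since $|\mathcal{A}|\le (2X_1+1)(2X_2+1)$ with $dq_1'X_1,dq_2'X_2\le T$, we get $|\mathcal{A}|\ll T^2/(d^2q_1'q_2')$. Properness forces $q_2'>2X_1$ (say), hence $|\mathcal{A}|\ll X_2 q_2'\ll T/d$. So large $d$ is trivially fine: $T/d\le T^{20/27}$ once $d\ge T^{7/27}$. For $d\le T^{7/27}$, I would balance this trivial bound against the sublattice bound of step 3. If the balancing fails, I would fall back on running the character-sum proof of Theorem \ref{egham} verbatim with the twisted square condition, which avoids losing the factor $d_0$ at all. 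I expect this bookkeeping to be where the real work lies.
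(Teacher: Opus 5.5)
\textbf{There is a genuine gap.} It comes from an arithmetic slip in your Step~2, and the ``main obstacle'' you then cannot remove is an artifact of that slip.

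Write $\phi(x)=x_1q_1'+x_2q_2'$ and take $x$ in the sublattice $L_0=\{x\in\Z^2: d_0\mid \phi(x)\}$. The elements of $\mathcal{A}$ are then $d\,\phi(x)=(d_0s)^2\cdot \phi(x)/d_0$. Your own sentence ``$d\,d_0\,\mathcal{A}'$ avoids squares'' already shows that $\mathcal{A}'=\{\phi(x)/d_0\}$ avoids non-zero squares and lies in $[-T/(dd_0),T/(dd_0)]$, not merely in $[-T/d,T/d]$. With the correct range, Step~3 gives
\[
  X_1X_2\ll d_0\Bigl(\frac{T}{dd_0}\Bigr)^{20/27+\varepsilon}\le d_0^{7/27}d^{-20/27}T^{20/27+\varepsilon}\le d^{-13/27}T^{20/27+\varepsilon},
\]
uniformly in $d_0\le d$. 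This is exactly the mechanism of the paper's Case~III. There the index-$d$ lattice $\{x: d\mid x_1\tilde q_1+x_2\tilde q_2\}$ is used and the full square $d^2$ is split off, so the new progression lies in $[-T/d^2,T/d^2]$. The loss of the index $d$ in the count is beaten because $2\cdot\frac{20}{27}>1$.

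As written, though, your proposal does not prove the theorem. In the range $d\le T^{7/27}$, $d_0>d^{20/27}$ nothing works. For $d_0=d$, balancing $T/d$ against $d^{7/27}T^{20/27}$ only yields $T^{27/34}$. The fallback of rerunning Theorem~\ref{egham} ``verbatim'' with the twisted condition $a=d_0t^2$ is not carried out. Lemmas~\ref{graz} and~\ref{sirene} produce representations of $n^2$, not of $d_0n^2$, and the congruence step changes when $\gcd(d_0,q_1')>1$.

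\textbf{The second gap is coprimality in Step~3.} ``This can again be arranged by the same reduction'' is not an argument, because iterating the reduction needs an induction in which the implied constants do not pile up. The paper does this by strong induction on $T$:
\begin{itemize}
\item bounded $d$ is handled directly by $x_i=da_i$ and Theorem~\ref{egham};
\item $d\ge T^{1/2}$ is handled by properness, which is your bound $T/d$;
\item for $d$ above a suitable absolute constant, the saving $d^{-13/27}$ absorbs the absolute constants from Minkowski's second theorem, so the same constant propagates.
\end{itemize}
Alternatively, you can avoid recursion altogether. In dimension two the vectors attaining the successive minima can be chosen to form a basis $u,v$ of $L_0$. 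Since $\phi$ is surjective, $\phi(L_0)=d_0\Z$, which forces $\gcd(\phi(u)/d_0,\phi(v)/d_0)=1$.

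With these two repairs, plus the routine degenerate cases $\tilde X_i<1$ handled by Lemma~\ref{losteria}, your route becomes a valid and slightly more economical variant of the paper's Case~III. It uses the squarefree part $d_0$ rather than $d$.
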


One might also ask about lower bounds for this problem.
We address this question in the last section, linking it
to the size of the smallest quadratic non-residue modulo
a prime. In particular, we will show that in Theorem
\ref{egham} and Theorem \ref{burger} no exponent smaller than
$\frac12$ is possible, and that going below $\frac12+\frac{1}
{8\sqrt{e}} \approx 0.5758$ would need an improvement on
Burgess's bound for the smallest quadratic non-residue for
primes congruent to $1$ modulo $4$.

\bigskip
\emph{Notation:} As usual, $\varepsilon$ denotes any fixed positive small number. In statements involving $\ll$ it is understood that the implied $O$-constant is allowed to depend on $\varepsilon$ unless otherwise mentioned.

\section{Preparations}

\begin{lemma}
\label{losteria}
Let $q \in \N$ and $X>0$, and suppose that
\[
  \mathcal{A}_q(X) = \{xq \mid x \in \Z, |x| \le X\}
  \subset [-T,T]
\]
does not contain a non-zero perfect square. Then
\[
  |\mathcal{A}_q(X)| \ll T^{1/2}.
\]
\end{lemma}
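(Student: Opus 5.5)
The plan is to exhibit an explicit non-zero square inside $\mathcal{A}_q(X)$ whenever $X$ is large compared to $T^{1/2}$. The natural candidate is $q^2 = q \cdot q$, which lies in $\mathcal{A}_q(X)$ as soon as $q \le X$. So we split according to the size of $q$ relative to $X$.

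\emph{Case 1: $q \le X$.} Then taking $x = q$ gives $xq = q^2 \in \mathcal{A}_q(X)$. This is a non-zero perfect square, contradicting the hypothesis. Hence under the hypothesis of the lemma we must have $q > X$.

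\emph{Case 2: $q > X$.} Here we use the containment $\mathcal{A}_q(X) \subset [-T,T]$. Assume $X \ge 1$; otherwise $|\mathcal{A}_q(X)| = 1$ and there is nothing to prove. Then the element $\lfloor X \rfloor q$ lies in $[-T,T]$, so $q\lfloor X\rfloor \le T$. Since $\lfloor X \rfloor \ge X/2$, this gives $qX \le 2T$. Combining with $X < q$ yields $X^2 < qX \le 2T$, so $X < (2T)^{1/2}$. Therefore
\[
  |\mathcal{A}_q(X)| = 2\lfloor X\rfloor + 1 \le 2X + 1 \ll T^{1/2},
\]
where we use $T \ge 1$. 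We may assume this, since if $X \ge 1$ then the non-zero element $q$ lies in $[-T,T]$, so $T \ge q \ge 1$.

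There is no real obstacle in this argument. The only points requiring care are the trivial case $X<1$ and replacing $\lfloor X\rfloor$ by $X$ up to a factor of $2$. The key idea is simply that $q^2$ is the cheapest square in the progression, which forces $q > X$; the length bound $qX \le T$ then caps $X$ at about $T^{1/2}$.
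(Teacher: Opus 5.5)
Your proof is correct, and it is the standard argument the paper relies on: the paper gives no proof of its own and points to the introduction of Croot--Lyall--Rice, where the same observation is made, namely that $q^2 \in \mathcal{A}_q(X)$ unless $X<q$, and then $\lfloor X\rfloor^2 < q\lfloor X\rfloor \le T$. In the degenerate case $X<1$, the bound $1 \ll T^{1/2}$ needs $T \gg 1$; this is also the paper's implicit standing assumption.
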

\begin{proof}
See the introduction of \cite{Croot-Lyall-Rice}.
\end{proof}
\begin{lemma}
\label{graz}
Let $\varepsilon>0$ and let $1 \le q_1 \le q_2$ be coprime integers.
For any $N \in \N$ there exist $x_1, x_2 \in \Z$ such that
$x_1 q_1 + x_2 q_2 = n^2$ for some integer $n$ with $1 \le n \le N$
and
\[
  x_i \ll q_2^{\frac{1}{2}+\varepsilon} + \frac{q_2^{2+\varepsilon}}{N^2}
  \quad (1 \le i \le 2).
\]
\end{lemma}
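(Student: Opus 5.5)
We reduce the statement to a lattice-point problem for the congruence $n^2 \equiv x_1q_1 \pmod{q_2}$. Since $\gcd(q_1,q_2)=1$, for every $n$ there is a unique residue $x_1 \equiv \overline{q_1}\, n^2 \pmod{q_2}$. Here $\overline{q_1}$ denotes the inverse of $q_1$ modulo $q_2$. Choosing the representative $x_1$ in $(-q_2/2,q_2/2]$, we get $n^2 - x_1q_1 = x_2q_2$ with
\[
|x_2| \le \frac{n^2 + q_1q_2/2}{q_2} \le \frac{N^2}{q_2} + \frac{q_1}{2}.
\]
So we must find $n \le N$ for which the residue $x_1$ is small, say $|x_1| \le H$, and then control $x_2$. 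Because $q_1 \le q_2$ and $|x_1|\le H$, we have $|x_1q_1| \le Hq_2$. Hence
\[
|x_2| \le \frac{N^2}{q_2} + H.
\]
This shows that $N$ should be cut down to an effective length $N' = \min(N, q_2^{3/4})$, since larger $n$ only spoil the bound on $x_2$.

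The core step is to count solutions of $\overline{q_1}\, n^2 \equiv h \pmod{q_2}$ with $1\le n\le N'$ and $|h|\le H$. We detect the condition $|h| \le H$ with additive characters modulo $q_2$, using a smooth majorant or the Erd\H{o}s–Tur\'an inequality. The main term is $\asymp N'H/q_2$. The error term is controlled by incomplete quadratic Gauss sums
\[
S(a) = \sum_{n\le N'} e\!\left(\frac{a\,\overline{q_1}\, n^2}{q_2}\right), \qquad a \not\equiv 0 \pmod{q_2}.
\]
We complete these sums and use $|G(b;d)| \ll d^{1/2}$ for complete Gauss sums modulo $d = q_2/\gcd(a,q_2)$ (for odd $d$; even $d$ adds only bounded factors). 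This gives
\[
|S(a)| \ll \left(\frac{N'}{d^{1/2}} + d^{1/2}\right)\log q_2.
\]
Summing over $|a| \le q_2/H$ and grouping by $\gcd(a,q_2)$, with a divisor bound costing $q_2^{\varepsilon}$, the total error is $\ll q_2^{\varepsilon}\bigl(N'H^{-1/2}\cdots\bigr)$. This is roughly
\[
q_2^{\varepsilon}\left(N'\,(q_2/H)^{1/2}\, q_2^{-1/2} + q_2^{1/2}\right)
\]
after normalising by $H/q_2$ and comparing with the main term. One can also use Weyl differencing, which yields the same shape. A solution is therefore guaranteed once
\[
\frac{N'H}{q_2} \gg q_2^{1/2+\varepsilon} \qquad \text{and} \qquad H \gg q_2^{\varepsilon}.
\]
This means
\[
H \asymp q_2^{1/2+\varepsilon} + \frac{q_2^{3/2+\varepsilon}}{N'}.
\]

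Now we combine the two bounds. If $N \ge q_2^{3/4}$, take $N' = q_2^{3/4}$. Then $H \ll q_2^{3/4+\varepsilon}$, which is not good enough, so this crude completion loses. I therefore expect the main obstacle to be the error estimate: we need it to give $H \ll q_2^{1/2+\varepsilon} + q_2^{2+\varepsilon}/N^2$. The fix is to exploit the square structure more sharply. We count pairs via $n^2 \equiv q_1h \pmod{q_2}$, so each admissible $h$ with $q_1h$ a quadratic residue has about $2^{\omega(q_2)}$ roots $n$ modulo $q_2$. We then need roots in $[1,N]$, i.e. equidistribution of square roots of $q_1h$ for $|h| \le H$, on average over $h$. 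Summing over $h$ first, the exponential sums become
\[
\sum_{|h|\le H}\ \sum_{n^2 \equiv q_1h \pmod{q_2}} e\!\left(\frac{kn}{q_2}\right),
\]
that is, $\sum_n e(kn/q_2)$ over $n$ with $\overline{q_1}\, n^2$ in a short interval. Completing in the $h$ variable gives Sali\'e-type sums bounded by $q_2^{1/2+\varepsilon}$. The error becomes $q_2^{1/2+\varepsilon}$ against a main term $HN/q_2$. Under the stated bound on $H$, this main term is at least $q_2^{1/2+\varepsilon}$ when $N \le q_2$, so a solution exists. For $N > q_2$ we simply restrict to $N = q_2$.

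For the $x_2$ bound, the case $N \le q_2^{3/4}$ gives
\[
\frac{N^2}{q_2} \le q_2^{1/2} \le H,
\]
so $x_2 \ll H$. In the case $N > q_2^{3/4}$, we apply the argument with $N' = q_2^{3/4}$, and then
\[
H \ll q_2^{1/2+\varepsilon} \qquad \text{and} \qquad |x_2| \le \frac{N'^2}{q_2} + H \ll q_2^{1/2+\varepsilon}.
\]
Both $x_1$ and $x_2$ then satisfy the claimed bound
\[
x_i \ll q_2^{1/2+\varepsilon} + \frac{q_2^{2+\varepsilon}}{N^2}.
\]
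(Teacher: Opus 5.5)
The paper gives no proof of its own here; it quotes the statement as Lemma~4 of Zaharescu. Your proposal has a genuine gap at the decisive step, and the gap sits exactly where the paper uses the lemma: Lemma~\ref{denmark} takes $N=q_2^{3/4}$ and needs $x_1,x_2\ll q_2^{1/2+\varepsilon}$.

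Your first computation correctly shows that completing the quadratic sums gives only $H\asymp q_2^{1/2+\varepsilon}+q_2^{3/2+\varepsilon}/N'$, which is $q_2^{3/4+\varepsilon}$ at $N'=q_2^{3/4}$. The proposed fix does not change this:
\begin{itemize}
\item Summing over $h$ first counts the same set $\{(n,h): 1\le n\le N,\ |h|\le H,\ n^2\equiv q_1h \pmod{q_2}\}$.
\item Completing in $h$ produces the complete sums $\sum_{n \bmod q_2} e\bigl((j\overline{q_1}n^2+kn)/q_2\bigr)$. These are again quadratic Gauss sums, not Sali\'e sums, and have modulus about $q_2^{1/2}$. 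So the error is still of order $q_2^{1/2}$ up to $q_2^{\varepsilon}$.
\item Your assertion that the main term $NH/q_2$ is at least $q_2^{1/2+\varepsilon}$ for the stated $H$ whenever $N\le q_2$ is false. In fact $NH/q_2\asymp Nq_2^{-1/2+\varepsilon}+q_2^{1+\varepsilon}/N$, which is about $q_2^{1/4+\varepsilon}$ at $N=q_2^{3/4}$.
\end{itemize}
So the claim $H\ll q_2^{1/2+\varepsilon}$ with $N'=q_2^{3/4}$ in your last paragraph is unsupported. After balancing $H$ against $|x_2|\le N'^2/q_2+H$, your argument actually proves $x_i\ll q_2^{2/3+\varepsilon}+q_2^{3/2+\varepsilon}/N$. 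This is weaker than the lemma for every $N>q_2^{1/2}$, and it would give only $q_2^{2/3+\varepsilon}$ in Lemma~\ref{denmark}.

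Sharper exponential-sum estimates would not repair this. Put $\alpha=\overline{q_1}/q_2$; the lemma at $N=q_2^{3/4}$ asserts some $n\le N$ with $\|n^2\alpha\|\ll q_2^{-1/2+\varepsilon}=N^{-2/3+O(\varepsilon)}$. Suppose you detect $|h|\le H$ by additive characters and bound the error in absolute value. Even granting square-root cancellation $|S(a)|\approx N^{1/2}$ for the roughly $q_2/H$ relevant frequencies, the error is about $\frac{H}{q_2}\cdot\frac{q_2}{H}\cdot N^{1/2}=N^{1/2}$, against a main term $NH/q_2$. This forces $H\gg q_2N^{-1/2}$, which is Heilbronn's $N^{-1/2}$ barrier and equals $q_2^{5/8}$ at $N=q_2^{3/4}$. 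Reaching $q_2^{1/2+\varepsilon}$ needs an input that uses the arithmetic of squares modulo $q_2$, not equidistribution with square-root error terms. Your proposal has no such ingredient; the paper takes the result from Zaharescu.
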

\begin{proof}
This is Lemma 4 in \cite{Zaharescu}.
\end{proof}
\begin{lemma}
\label{sirene}
Let $q_1, q_2 \in \N$ be coprime, and let $N \in \N$. Then there exist
$x_1, x_2 \in  \Z$ such that
\[
  |x_1| \ll \frac{N^2}{q_1}+\frac{q_1^{5/4+\varepsilon} q_2}{N^2},
  \quad
  |x_2| \ll \frac{q_1^{9/4+\varepsilon}}{N^2}
\]
and $x_1 q_1+x_2 q_2$ is a square $n^2$ with $1 \le n^2 \le N^2$.
\end{lemma}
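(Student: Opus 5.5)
The plan is to make the congruence modulo $q_1$ do all the work. Suppose we find an integer $n$ with $1 \le n \le N$ and an integer $x_2$ with $|x_2| \le H$ such that
\[
  n^2 \equiv x_2 q_2 \pmod{q_1}.
\]
Then $x_1 := (n^2 - x_2 q_2)/q_1$ is an integer, and $x_1 q_1 + x_2 q_2 = n^2$ with $1 \le n^2 \le N^2$. It satisfies
\[
  |x_1| \le \frac{N^2}{q_1} + \frac{H q_2}{q_1}.
\]
With $H \asymp q_1^{9/4+\varepsilon}/N^2$ this is exactly the bound claimed for $x_1$. So the lemma reduces to a statement about a single modulus. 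Put $q = q_1$ and let $a \equiv q_2 \pmod q$, so $a$ is invertible modulo $q$. We must show that there are $1 \le n \le N$ and $|x| \le H$ with $n^2 \equiv a x \pmod q$, for $H \gg q^{9/4+\varepsilon}/N^2$.

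We may assume $N < q^{9/8}$. For larger $N$ we have $q^{9/4}/N^2 \ll 1$, so we are forced to take $x_2 = 0$. This is still admissible, since $N \ge q$ and we can take $n = q$, $x_2 = 0$, $x_1 = q$. We may also assume $H < q$, since otherwise $x$ ranges over a complete residue system and $n = 1$ works.

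For the main range I would count solutions. Let $\chi$ run over the Dirichlet characters modulo $q$. The number of pairs $(n,x)$ with $(nx,q)=1$ and $n^2 \equiv ax$ is
\[
  \frac{1}{\varphi(q)} \sum_{\chi} \overline{\chi}(a)
  \Bigl(\sum_{n \le N} \chi^2(n)\Bigr)
  \Bigl(\sum_{|x| \le H} \overline{\chi}(x)\Bigr).
\]
The characters with $\chi^2$ principal give a main term of size roughly $NH/q^{1+\varepsilon}$. For the remaining characters there are two tools. I would apply Cauchy--Schwarz together with the large sieve or orthogonality to the $x$-sums, which gives $\sum_\chi |\sum_x \overline\chi(x)|^2 \ll qH$. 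I would bound the $n$-sums either by Burgess, $\sum_{n\le N}\chi^2(n) \ll N^{1-1/r} q^{(r+1)/(4r^2)+\varepsilon}$, or by Pólya--Vinogradov, $\ll q^{1/2+\varepsilon}$. The same bounds apply to the $x$-sums when they are short.

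The main obstacle is choosing the combination of these bounds so that the error beats $NH/q$ exactly when $N^2 H \gg q^{9/4+\varepsilon}$. For example, one would take Burgess with $r=2$ on one variable and mean-square on the other, possibly after raising to a fourth moment $\sum_\chi |\sum_x|^4 \ll q H^2 q^{\varepsilon}$ in the style of Zaharescu. I would first try Hölder with exponents $(4,4,2)$: the fourth moment on the $n$-sums, the fourth moment on the $x$-sums, and a trivial bound for the rest. I would then check whether the resulting condition is $N^2 H \gg q^{9/4}$; the exponent $9/4 = 2 + 1/4$ strongly suggests a Burgess-type $q^{1/4}$ loss on top of the trivial $q^2$. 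If this bookkeeping does not give $9/4$, the fallback is the additive version. There the sum $\sum_{n\le N} e(h\bar a n^2/q)$ is handled by Weyl differencing, i.e. an incomplete Gauss sum estimate, which gives the complementary range.

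The non-coprime cases ($(n,q)>1$ or $(x,q)>1$) can simply be discarded from the count. They only lower the count, so they do not affect the existence of a solution.
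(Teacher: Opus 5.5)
You have a genuine gap. Your reduction is correct, and it is exactly how the paper finishes: it suffices to find $1\le n\le N$ and $x_2$ with $n^2\equiv x_2q_2\pmod{q_1}$ and $|x_2|\ll q_1^{9/4+\varepsilon}/N^2$. But you never show that such a pair exists. The decisive estimate is left as something you ``would then check'', and the tools you list cannot supply it.

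Put $q=q_1$ and $H\asymp q^{9/4+\varepsilon}/N^2$; the nontrivial range is $q^{5/8}\ll N<q$. (Take $n=q$, $x_2=0$ for every $N\ge q$, not only for $N\ge q^{9/8}$. For such $N$ one has $H\ll q^{1/4+\varepsilon}$, and your coprime count would need an element of size $\le H$ in the square class of $\overline{q_2}$, which Burgess does not provide below $q^{1/4}$.)

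In the nontrivial range the main term is about $NH/q\asymp q^{5/4+\varepsilon}/N$. Every multiplicative combination you propose bounds the non-real characters by at least about $(NH)^{1/2}$:
\begin{itemize}
\item Cauchy--Schwarz on the $x$-sums would need $|\sum_{n\le N}\chi^2(n)|\ll q^{1/8}$ for all non-real $\chi$. This is false in general: for prime $q$ and $N\le q^{1-\delta}$ these sums have mean square $\asymp N$ over the non-real $\chi$.
\item The reverse split would need $|\sum_{|x|\le H}\chi(x)|\ll H^{3/4}q^{-7/16}$, which is below square-root size.
\item H\"older with fourth moments gives at best $(NH)^{1/2}$.
\end{itemize}
So you only get a positive count when $NH\gg q^{2}$, whereas here $NH\asymp q^{9/4+\varepsilon}/N<q^2$. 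The additive fallback does better, since incomplete Gauss sums give an error $\ll q^{1/2+\varepsilon}$. However, it needs $NH\gg q^{3/2+\varepsilon}$, i.e.\ $N\ll q^{3/4}$. For $q^{3/4}\ll N<q$ your proposal has no argument.

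The missing idea is to linearize rather than count. Let $H(k)$ be the least $h$ such that every unit mod $k$ is $yz^2$ with $|y|\le h$. By Burgess for real characters, $H(q_1)\ll q_1^{1/4+\varepsilon}$ (Zaharescu's Lemma 1). Hence there are $b,c$ with $|b|\ll q_1^{1/4+\varepsilon}$, $(b,q_1)=1$ and $bq_2\equiv c^2\pmod{q_1}$.

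Then $n^2\equiv bq_2d^2$ holds whenever $d\equiv n\overline{c}\pmod{q_1}$, which is a \emph{linear} condition. Dirichlet's approximation theorem applied to $\overline{c}/q_1$ gives $1\le n\le N$ with such a $d$ satisfying $|d|\le q_1/N$. Taking $x_2=bd^2$ yields $|x_2|\le|b|\,q_1^2/N^2\ll q_1^{9/4+\varepsilon}/N^2$, and your formula for $x_1$ finishes.

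Your guess that $9/4=2+\tfrac14$ reflects a Burgess $q^{1/4}$ on top of a trivial $q^2$ is right. But the $q^2$ is $|d|^2\le(q_1/N)^2$ from pigeonhole, and the $q^{1/4}$ is $|b|$. The only character-sum input is Burgess for quadratic characters, which your main term needed anyway, and no cancellation over non-real characters is required.
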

\begin{proof}
As in \cite[p. 380]{Zaharescu}, for each $k \in \N$ we define
$H(k)$ to be the smallest $h  \in \N$ such that for all
integers $x$ coprime to $k$ there exist integers $y,z$ with
$|y| \le h$ and $x \equiv y z^2 \pmod k$.
Following the proof of Lemma 3 in \cite{Zaharescu} we find
integers $b, c$ with $|b| \le H(q_1)$ and $b$ coprime to $q_1$ such that $bq_2 \equiv c^2 \pmod {q_1}$. Note
that $H(q_1) \ll q_1^{1/4+\varepsilon}$ by Lemma 1 in
\cite{Zaharescu}. Clearly $c$ is coprime to $q_1$ because $b$ and $q_2$ are. Let $\overline{c}$ be an integer that is a
multiplicative inverse of $c$, i.e. $c \overline{c} \equiv 1
\pmod {q_1}$. Further following the proof of Lemma 3 in
\cite{Zaharescu}, using Dirichlet's theorem in Diophantine
approximation we find an integer $n$ with $1 \le n \le N$
such that the distance of $\frac{n\overline{c}}{q_1}$ to the
nearest integer is at most $\frac{1}{N}$. We can then choose
an integer $m$ such that
\[
  \left| \frac{n\overline{c}+mq_1}{q_1} \right| \le \frac1N.
\]
Writing $d=n\overline{c}+mq_1$, we find that $d \equiv
n\overline{c} \pmod {q_1}$ and $|d| \le \frac{q_1}{N}$.
As in \cite{Zaharescu}, it follows that
\[
  n^2 \equiv d^2 c^2 \equiv b q_2 d^2 \pmod {q_1}.
\]
Hence
\[
  x_1=\frac{n^2-bq_2d^2}{q_1}
\]
is an integer. Putting $x_2=bd^2$, we have found
integers $x_1, x_2$ such that $1 \le x_1 q_1 + x_2 q_2=n^2 \le N^2$,
$x_1 q_1 + x_2 q_2=n^2$ is a square, $|x_2| \ll H(q_1) q_1^2 N^{-2}
\ll q_1^{9/4+\varepsilon} N^{-2}$ and
\[
  |x_1| \le \frac{N^2}{q_1} + \frac{q_2}{q_1} |x_2| \ll
  \frac{N^2}{q_1} + \frac{q_1^{5/4+\varepsilon} q_2}{N^2}
\]
as required.
\end{proof}

In the following suppose that $q_1, q_2$ are coprime integers with
$1 \le q_1 \le q_2$, and that $X_1, X_2, T$ are positive numbers,
such that
\[
  {\mathcal A}_{q_1,q_2}(X_1,X_2)=\{ x_1 q_1+x_2 q_2\mid x_1, x_2 \in \Z,
  |x_1|\leq X_1, |x_2|\leq X_2\}\subseteq[-T,T]
\]
and no element of $\mathcal{A}_{q_1,q_2}(X_1,X_2)$ is a non-zero square. We clearly can assume that $X_1 \ge 1$
and $X_2 \ge 1$: If, say $X_1<1$, then necessarily $x_1=0$, and we are
reduced to a one-dimensional progression for which Lemma \ref{losteria} provides
an even better bound than required in Theorem \ref{egham}
or Theorem \ref{burger}. Analogously, for $X_2<1$.

\begin{lemma}
\label{abelsch}
We have
\[
  X_i \ll \frac{T}{q_i} \quad (1 \le i \le 2).
\]
\end{lemma}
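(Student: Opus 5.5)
The plan is to exhibit, for each $i$, one element of the progression of size roughly $q_iX_i$ and then use the containment $\mathcal{A}_{q_1,q_2}(X_1,X_2)\subseteq[-T,T]$. We may assume $X_1\ge 1$ and $X_2\ge 1$, as arranged just before the statement. Fix $i\in\{1,2\}$ and put $x_i=\lfloor X_i\rfloor$ and $x_j=0$ for $j\ne i$. Then $|x_i|\le X_i$ and $|x_j|\le X_j$, so the integer $x_iq_i$ lies in $\mathcal{A}_{q_1,q_2}(X_1,X_2)$. Hence $\lfloor X_i\rfloor q_i\le T$.

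Since $X_i\ge 1$, we have $\lfloor X_i\rfloor\ge X_i/2$, because $\lfloor y\rfloor> y-1$ and $\lfloor y\rfloor\ge 1$ together give $2\lfloor y\rfloor> y$. Therefore
\[
  X_i\le 2\lfloor X_i\rfloor\le \frac{2T}{q_i},
\]
which is the claimed bound $X_i\ll T/q_i$ with absolute implied constant $2$.

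There is no genuine obstacle. The only point needing care is the step from $\lfloor X_i\rfloor$ back to $X_i$, and this is exactly where the standing reduction to $X_1,X_2\ge 1$ is used. Neither the square-free hypothesis nor coprimality is needed here.
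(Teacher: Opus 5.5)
Your proof is correct and is the same argument the paper intends. The paper only says the bound ``follows trivially'' from $\mathcal{A}_{q_1,q_2}(X_1,X_2)\subset[-T,T]$, and you have spelled this out, correctly invoking the standing assumption $X_i\ge 1$ to pass from $\lfloor X_i\rfloor$ to $X_i$.
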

\begin{proof}
This follows trivially from $\mathcal{A}_{q_1,q_2}(X_1, X_2) \subset [-T,T]$.
\end{proof}

\begin{lemma}
\label{laber}
We have
\[
  X_i \ll T^{1/2} \quad (1 \le i \le 2).
\]
\end{lemma}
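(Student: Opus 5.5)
The plan is to apply Lemma \ref{losteria} to one-dimensional subprogressions of $\mathcal{A}_{q_1,q_2}(X_1,X_2)$, one in each coordinate direction.

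For $i=1$, set $x_2=0$. The set
\[
  \mathcal{A}_{q_1}(X_1)=\{x_1q_1 \mid x_1\in\Z,\ |x_1|\le X_1\}
\]
is contained in $\mathcal{A}_{q_1,q_2}(X_1,X_2)$. Hence it lies in $[-T,T]$ and contains no non-zero perfect square. Lemma \ref{losteria} then gives $|\mathcal{A}_{q_1}(X_1)|\ll T^{1/2}$.

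Since $q_1\ge 1$, the map $x_1\mapsto x_1q_1$ is injective. Therefore
\[
  |\mathcal{A}_{q_1}(X_1)| = 2\lfloor X_1\rfloor+1 \ge X_1 ,
\]
and so $X_1\ll T^{1/2}$.

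The case $i=2$ is the same: set $x_1=0$ and apply Lemma \ref{losteria} with $q=q_2$ and $X=X_2$. This gives $2\lfloor X_2\rfloor+1\ll T^{1/2}$, hence $X_2\ll T^{1/2}$. No properness or coprimality is needed, since each one-dimensional progression on its own is trivially proper. There is no real obstacle here beyond checking that the subprogressions inherit the hypotheses of Lemma \ref{losteria}, which they do directly.
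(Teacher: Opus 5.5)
Your proposal is correct and is the paper's argument: set the other coordinate to zero and apply Lemma~\ref{losteria} to the resulting one-dimensional progression. Your explicit check that $2\lfloor X_1\rfloor+1\ge X_1$ fills in the step the paper leaves implicit.
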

\begin{proof}
Without loss of generality let $i=1$. Put $x_2=0$. Then
$\mathcal{A}_{q_1}(X_1)$ is a one-dimensional arithmetic progression
in $[-T,T]$ avoiding non-zero squares, whence by Lemma
\ref{losteria} we have $X_1 \ll T^{1/2}$.
\end{proof}

\begin{lemma}
\label{denmark}
We have $X_1 \ll q_2^{\frac{1}{2}+\varepsilon}$ or
$X_2 \ll q_2^{\frac{1}{2}+\varepsilon}$.
\end{lemma}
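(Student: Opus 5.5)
Argue by contradiction and apply Lemma \ref{graz} with $N$ taken as large as the progression allows. Let $C$ be the implied constant in Lemma \ref{graz}, and suppose that both $X_1 > 2C q_2^{1/2+\varepsilon}$ and $X_2 > 2C q_2^{1/2+\varepsilon}$; enlarging the final implied constant handles the case that $C$ depends on $\varepsilon$. The idea is to produce a non-zero square $n^2 = x_1q_1+x_2q_2$ with $|x_i|\le X_i$, which contradicts the hypothesis that $\mathcal{A}_{q_1,q_2}(X_1,X_2)$ avoids non-zero squares.

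To do this, choose $N$ as large as possible, namely $N = \lfloor \sqrt{q_2}\,q_2^{1/2} \rfloor$. Then the second term $q_2^{2+\varepsilon}/N^2$ in Lemma \ref{graz} is of size $q_2^{1+\varepsilon}$. That is too big, so a different choice is needed. Note that Lemma \ref{graz} places no restriction tying $N$ to $T$: $N$ is an arbitrary natural number, and the only requirement is $1\le n\le N$. Therefore take $N = q_2^{3/4}$ (rounded up). The second term becomes $q_2^{2+\varepsilon}/q_2^{3/2} = q_2^{1/2+\varepsilon}$. Both terms are then $\ll q_2^{1/2+\varepsilon}$, so Lemma \ref{graz} gives integers $x_1,x_2$ with $|x_i|\le C' q_2^{1/2+\varepsilon}$ and $x_1q_1+x_2q_2=n^2$, $1\le n$. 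In fact any $N\ge q_2^{3/4}$ works.

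If $X_1$ and $X_2$ both exceed $C'q_2^{1/2+\varepsilon}$, then the element $x_1q_1+x_2q_2$ lies in $\mathcal{A}_{q_1,q_2}(X_1,X_2)$ and equals the non-zero square $n^2$, a contradiction. Hence one of $X_1,X_2$ is $\ll q_2^{1/2+\varepsilon}$. The hypotheses of Lemma \ref{graz}, namely coprimality and $1\le q_1\le q_2$, are exactly the standing assumptions. No containment in $[-T,T]$ is needed for this step, since $n^2$ lies in the progression automatically once the coefficient bounds hold.

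The only step requiring care is the choice of $N$, which must make the term $q_2^{2+\varepsilon}/N^2$ no larger than $q_2^{1/2+\varepsilon}$. Since $N$ is free, this is unproblematic, and the conclusion holds even with an implied constant independent of $T$.
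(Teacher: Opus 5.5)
Your argument is correct and is the paper's proof: take $N = q_2^{3/4}$ (rounded up) in Lemma \ref{graz} so both terms are $\ll q_2^{1/2+\varepsilon}$, and note that if both $X_i$ exceeded this bound, the resulting non-zero square $n^2$ would lie in $\mathcal{A}_{q_1,q_2}(X_1,X_2)$. (Your discarded first attempt contains a harmless arithmetic slip: $N \approx q_2$ gives $q_2^{2+\varepsilon}/N^2 \approx q_2^{\varepsilon}$, not $q_2^{1+\varepsilon}$, so that choice would also have worked, consistent with your remark that any $N \ge q_2^{3/4}$ suffices.)
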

\begin{proof}
Choose $N=q_2^{3/4}$ in Lemma \ref{graz}. If
$X_1 \gg q_2^{1/2+\varepsilon}$ and
$X_2 \gg q_2^{1/2+\varepsilon}$, then there exist integers
$x_1, x_2$ with $|x_1| \le X_1$ and $|x_2| \le X_2$ such that
$x_1 q_1 + x_2 q_2$ is a non-zero square, which is a contradiction.
\end{proof}

\begin{lemma}
\label{dachterrasse}
For every $N \in \N$ we have
\[
  X_1 \ll \frac{N^2}{q_1}+\frac{q_1^{5/4+\varepsilon} q_2}{N^2}
  \quad \text{or} \quad
  X_2 \ll \frac{q_1^{9/4+\varepsilon}}{N^2}.
\]
\end{lemma}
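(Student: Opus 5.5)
This is a direct contrapositive application of Lemma \ref{sirene}, in the same way that Lemma \ref{denmark} was deduced from Lemma \ref{graz}.

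Fix $N \in \N$. Since $q_1, q_2$ are coprime, Lemma \ref{sirene} supplies integers $y_1, y_2$ such that $y_1 q_1 + y_2 q_2 = n^2$ with $1 \le n^2 \le N^2$. In particular this value is a non-zero perfect square. The lemma also gives
\[
  |y_1| \le C\Bigl(\frac{N^2}{q_1}+\frac{q_1^{5/4+\varepsilon} q_2}{N^2}\Bigr),
  \qquad
  |y_2| \le C\,\frac{q_1^{9/4+\varepsilon}}{N^2},
\]
where $C$ depends only on $\varepsilon$.

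Now argue by contradiction. Suppose that both alternatives in the statement fail with the constant $C$. That means
\[
  X_1 > C\Bigl(\frac{N^2}{q_1}+\frac{q_1^{5/4+\varepsilon} q_2}{N^2}\Bigr)
  \quad\text{and}\quad
  X_2 > C\,\frac{q_1^{9/4+\varepsilon}}{N^2}.
\]
Then $|y_1| \le X_1$ and $|y_2| \le X_2$. Hence $y_1 q_1 + y_2 q_2 = n^2$ belongs to $\mathcal{A}_{q_1,q_2}(X_1,X_2)$. This contradicts the standing assumption that no element of this set is a non-zero square. Therefore at least one of the two bounds holds, with implied constant $C$, for every $N$.

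There is no real obstacle here. The only care needed is to make the implied constants explicit, so that "$\ll$ fails" turns into a strict inequality that places the point $(y_1,y_2)$ inside the box.
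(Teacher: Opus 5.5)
Your proposal is correct and is the same argument as the paper's, whose proof just says the lemma follows from Lemma~\ref{sirene}. You have spelled out that deduction: fix the implied constant $C$ from Lemma~\ref{sirene}, then note that if both bounds failed, the point $(y_1,y_2)$ would lie in the box and produce a non-zero square in $\mathcal{A}_{q_1,q_2}(X_1,X_2)$.
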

\begin{proof}
This follows from Lemma \ref{sirene}.
\end{proof}

\begin{lemma}
\label{bodensee}
For every $N \in \N$ we have
\[
  X_1 \ll \frac{q_2^{9/4+\varepsilon}}{N^2}
  \quad \text{or} \quad
  X_2 \ll \frac{N^2}{q_2}+\frac{q_2^{5/4+\varepsilon} q_1}{N^2}.
\]
\end{lemma}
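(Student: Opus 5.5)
The plan is to obtain Lemma \ref{bodensee} from Lemma \ref{sirene} with the roles of $q_1$ and $q_2$ interchanged, exactly as Lemma \ref{dachterrasse} was obtained from Lemma \ref{sirene} itself. Lemma \ref{sirene} assumes only that $q_1,q_2\in\N$ are coprime; it does not use $q_1\le q_2$. So we may apply it to the ordered pair $(q_2,q_1)$. For every $N\in\N$ this gives integers $y_1,y_2$ with
\[
  |y_1| \ll \frac{N^2}{q_2}+\frac{q_2^{5/4+\varepsilon} q_1}{N^2},
  \qquad
  |y_2| \ll \frac{q_2^{9/4+\varepsilon}}{N^2},
\]
such that $y_1 q_2 + y_2 q_1 = n^2$ with $1\le n^2\le N^2$. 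Here $y_1$ multiplies $q_2$ and $y_2$ multiplies $q_1$.

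Next, relabel by setting $x_1=y_2$ and $x_2=y_1$. Then $x_1 q_1 + x_2 q_2 = n^2$ is a non-zero square, and the bounds become
\[
  |x_1|\ll \frac{q_2^{9/4+\varepsilon}}{N^2},
  \qquad
  |x_2|\ll \frac{N^2}{q_2}+\frac{q_2^{5/4+\varepsilon}q_1}{N^2}.
\]

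Now argue by contradiction, as in Lemma \ref{denmark}. Suppose both inequalities in the statement fail, with implied constants chosen larger than those appearing above. Then $X_1 \ge |x_1|$ and $X_2\ge |x_2|$, so $x_1q_1+x_2q_2\in\mathcal{A}_{q_1,q_2}(X_1,X_2)$. This element is a non-zero perfect square, contradicting the standing hypothesis. Hence at least one of the two inequalities in Lemma \ref{bodensee} holds.

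There is no real obstacle here. The only point to check is that the proof of Lemma \ref{sirene} never used the ordering $q_1\le q_2$. It uses only that $b$ is coprime to $q_1$, that $c$ is invertible modulo $q_1$, the bound on $H(q_1)$, and Dirichlet approximation, and all of these are symmetric in the required sense.
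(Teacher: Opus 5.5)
Your proposal is correct and matches the paper's proof, which applies Lemma \ref{sirene} and the argument of Lemma \ref{dachterrasse} with the roles of $(x_1,q_1)$ and $(x_2,q_2)$ reversed. Your observation that Lemma \ref{sirene} assumes only coprimality, and not $q_1\le q_2$, is exactly what makes this swap legitimate.
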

\begin{proof}
Reverse the roles of $x_1, q_1$ versus $x_2, q_2$ in Lemma \ref{sirene} and
Lemma \ref{dachterrasse}.
\end{proof}

\begin{lemma}
\label{schwarzenegger}
We have
\[
  |\mathcal{A}_{q_1,q_2}(X_1,X_2)| \ll \frac{q_1^{1/8+\varepsilon} T}{q_2^{1/2}}.
\]
\end{lemma}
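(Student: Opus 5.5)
We want $|\mathcal A|\asymp X_1X_2\ll q_1^{1/8+\varepsilon}T q_2^{-1/2}$. The plan is to use Lemma \ref{dachterrasse} with a well-chosen $N$, in combination with the trivial bounds of Lemma \ref{abelsch}. Since $|\mathcal{A}_{q_1,q_2}(X_1,X_2)| \ll X_1X_2$ (recall $X_1,X_2\ge 1$), it suffices to bound $X_1X_2$.

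First, balance the two terms in the first alternative of Lemma \ref{dachterrasse}. Setting $N^2/q_1 = q_1^{5/4}q_2/N^2$ gives $N^2 = q_1^{9/8}q_2^{1/2}$. This is essentially a square of an integer up to constants; take $N=\lceil q_1^{9/16}q_2^{1/4}\rceil$, which only costs constant factors. With this choice, Lemma \ref{dachterrasse} yields
\[
X_1 \ll q_1^{1/8+\varepsilon}q_2^{1/2} \quad\text{or}\quad X_2 \ll \frac{q_1^{9/4+\varepsilon}}{q_1^{9/8}q_2^{1/2}} = \frac{q_1^{9/8+\varepsilon}}{q_2^{1/2}}.
\]

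Then split into the two cases.

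In the first case, combine with $X_2\ll T/q_2$ from Lemma \ref{abelsch}. This gives $X_1X_2\ll q_1^{1/8+\varepsilon}q_2^{1/2}\cdot T/q_2 = q_1^{1/8+\varepsilon}T q_2^{-1/2}$, which is exactly the claim.

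In the second case, combine with $X_1\ll T/q_1$. This gives $X_1X_2\ll (T/q_1)\, q_1^{9/8+\varepsilon}q_2^{-1/2} = q_1^{1/8+\varepsilon}Tq_2^{-1/2}$, which again is the claim.

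So both cases give the same bound, as the choice of $N$ was designed to ensure.

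The only point needing care is the choice of $N$ as an integer. Rounding $N$ to $\lceil\cdot\rceil$ changes $N^2$ by at most a constant factor, since $N\ge1$. We also need that $\varepsilon$ can absorb the implicit constants, which is routine. No use of $q_1\le q_2$ seems necessary beyond the standing assumptions.
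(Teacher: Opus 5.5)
Your proposal is correct and is essentially the paper's proof: choose $N \asymp q_1^{9/16}q_2^{1/4}$ in Lemma~\ref{dachterrasse}, then pair the first alternative with $X_2 \ll T/q_2$ and the second with $X_1 \ll T/q_1$ from Lemma~\ref{abelsch}. Your explicit rounding $N=\lceil q_1^{9/16}q_2^{1/4}\rceil$ is a harmless extra step that the paper omits.
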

\begin{proof}
Choose $N=q_1^{9/16} q_2^{1/4}$ in Lemma \ref{dachterrasse}. Then
$X_1 \ll q_1^{1/8+\varepsilon} q_2^{1/2}$ or
$X_2 \ll q_1^{9/8+\varepsilon} q_2^{-1/2}$.
In the first case, we use the bound $X_2 \ll \frac{T}{q_2}$ from Lemma
\ref{abelsch} and obtain
\[
  |\mathcal{A}_{q_1, q_2}(X_1,X_2)| \ll X_1 X_2 \ll
  q_1^{1/8+\varepsilon} q_2^{-1/2} T.
\]
In the second case, we use the bound $X_1 \ll \frac{T}{q_1}$
from Lemma \ref{abelsch}, and obtain
\[
  |\mathcal{A}_{q_1, q_2}(X_1,X_2)| \ll X_1 X_2 \ll T q_1^{1/8+\varepsilon} q_2^{-1/2}
\]
as well.
\end{proof}

\section{Proof of Theorem \ref{egham}}
By Lemma \ref{denmark} we have $X_1 \ll q_2^{1/2+\varepsilon}$ or
$X_2 \ll q_2^{1/2+\varepsilon}$.\\

\smallskip
\noindent
\textbf{Case I:} $X_1 \ll q_2^{1/2+\varepsilon}$.\\

\smallskip
\noindent
\textbf{Subcase A:} $q_2 \le T^{4/7}$.

\smallskip
\noindent
We choose $N \in \N$ with
\[
  N^2 \asymp \frac{q_2^{9/4+\varepsilon}}{X_1}.
\]
Then by Lemma \ref{bodensee} and $q_1 \le q_2 \le T^{4/7}$ we have 
\[
  X_2 \ll \frac{q_2^{5/4+\varepsilon}}{X_1} + \frac{q_1 X_1}{q_2}
  \ll \frac{T^{5/7+\varepsilon}}{X_1} + X_1.
\]
Moreover,
\[
  X_1^2 \ll q_2^{1+2\varepsilon} \ll T^{4/7+2\varepsilon}.
\]
Hence
\[
  |\mathcal{A}_{q_1,q_2}(X_1,X_2)| \ll X_1 X_2 \ll T^{5/7+\varepsilon}.
\]

\smallskip
\noindent
\textbf{Subcase B:} $q_2>T^{4/7}$.

\smallskip
\noindent
By Lemma \ref{abelsch} we have $X_2 \ll \frac{T}{q_2}$, so
\[
  |\mathcal{A}_{q_1,q_2}(X_1,X_2)| \ll X_1 X_2 \ll
  q_2^{1/2+\varepsilon} \frac{T}{q_2} \ll T^{5/7+\varepsilon}.
\]

\smallskip
\noindent
\textbf{Case II:} $X_2 \ll q_2^{1/2+\varepsilon}$.\\

\smallskip
\noindent
\textbf{Subcase A:} $q_2 \ge T^{2/3}$:

\smallskip
\noindent
\textbf{Subsubcase 1:} $q_1 \le T^{16/27}$:
Then from Lemma \ref{schwarzenegger} we obtain
\[
  |\mathcal{A}_{q_1, q_2}(X_1,X_2)| \ll \frac{q_1^{1/8+\varepsilon} T}{q_2^{1/2}}
  \ll T^{20/27+\varepsilon}
\]
as required.

\smallskip
\noindent
\textbf{Subsubcase 2:} $q_1>T^{16/27}$: Here we use Lemma \ref{abelsch}
and again obtain
\[
  |\mathcal{A}_{q_1, q_2}(X_1,X_2)| \ll X_1 X_2 \ll \frac{T^2}{q_1 q_2}
  \ll T^{20/27}.
\]

\smallskip
\noindent
\textbf{Subcase B:} $q_2<T^{2/3}$:

\smallskip
\noindent
Write
\[
  q_1=T^a, \quad q_2=T^b, \quad a \le b \le \frac23.
\]

\smallskip
\noindent
\textbf{Subsubcase 1:} $a+2b \le \frac{52}{27}$.

\smallskip
\noindent
By Lemma \ref{dachterrasse}, for each $N \in \N$ we either have
$X_1 \ll \frac{N^2}{q_1}+\frac{q_1^{5/4+\varepsilon}q_2}{N^2}$, in which
case we use $X_2 \ll q_2^{1/2+\varepsilon}$ by assumption of case II, or
$X_2 \ll \frac{q_1^{9/4+\varepsilon}}{N^2}$, in which case we use
$X_1 \ll \frac{T}{q_1}$ from Lemma \ref{abelsch}.
Hence
\[
  |\mathcal{A}_{q_1, q_2}(X_1,X_2)| \le X_1 X_2
  \ll \min_{N \in \N} \max
  \left\{ \left( \frac{N^2}{q_1}+\frac{q_1^{5/4+\varepsilon}q_2}{N^2}
  \right) q_2^{1/2+\varepsilon},
  \frac{q_1^{9/4+\varepsilon}}{N^2} \frac{T}{q_1} \right\}.
\]
We now show that it is possible to choose $N \in \N$ such that
simultaneously
\begin{align}
  \frac{N^2}{q_1} q_2^{1/2+\varepsilon}
  & \le T^{20/27+2\varepsilon}, \label{cond1}\\
  \frac{q_1^{5/4+\varepsilon}q_2}{N^2} q_2^{1/2+\varepsilon} & \le
  T^{20/27+\varepsilon},
  \label{cond2}\\
  \frac{q_1^{9/4+\varepsilon}}{N^2} \cdot \frac{T}{q_1} & \le
  T^{20/27},
  \label{cond3}
\end{align}
from which $|\mathcal{A}_{q_1,q_2}(X_1, X_2)| \ll T^{20/27+2\varepsilon}$ follows.
Condition \eqref{cond1} gives the upper bound
\begin{equation}
\label{cond4}
  N^2 \le q_1 q_2^{-1/2-\varepsilon} T^{20/27+2\varepsilon},
\end{equation}
condition \eqref{cond2} gives the lower bound
\begin{equation}
\label{cond5}
  N^2 \ge q_1^{5/4+\varepsilon} q_2^{3/2+\varepsilon}
  T^{-20/27-\varepsilon},
\end{equation}
and condition \eqref{cond3} gives the lower bound
\begin{equation}
\label{cond6}
  N^2 \ge q_1^{5/4+\varepsilon} T^{7/27}.
\end{equation}
Note that
\[
  q_1^{5/4+\varepsilon} q_2^{3/2+\varepsilon} T^{-20/27-\varepsilon} \le
  q_1^{5/4+\varepsilon} T^{7/27}
\]
as
$q_2 \le T^{2/3}$. To check that conditions \eqref{cond4}, \eqref{cond5}
and \eqref{cond6} are compatible it is therefore sufficient to check that
\eqref{cond4} and \eqref{cond6} are compatible. Now
\[
  q_1^{5/4+\varepsilon} T^{7/27} \le q_1 q_2^{-1/2-\varepsilon}
  T^{20/27+2\varepsilon}
\]
is equivalent to
\[
  q_1^{1/4+\varepsilon} q_2^{1/2+\varepsilon} \le T^{13/27+2\varepsilon},
\]
which is satisfied as $a+2b \le \frac{52}{27}$.
So we can indeed choose a positive integer $N$ satisfying
\eqref{cond1}--\eqref{cond3}, which shows that
\[
  |\mathcal{A}_{q_1, q_2}(X_1,X_2)| \ll T^{20/27+2\varepsilon}
\]
in this case.

\smallskip
\noindent
\textbf{Subsubcase 2:} $a+2b>\frac{52}{27}$.

\smallskip
\noindent
In this case, we use the bound $X_1 \ll \frac{T}{q_1}$ due to Lemma
\ref{abelsch} along with $X_2 \ll q_2^{1/2+\varepsilon}$ and $b \le \frac 23$ to arrive at
\begin{align*}
  |\mathcal{A}_{q_1, q_2}(X_1,X_2)| & \ll X_1 X_2 \ll \frac{T}{q_1}
  q_2^{1/2+\varepsilon} \ll T^{1-a+b/2+\varepsilon}
  \ll T^{-\frac{25}{27}+\frac{5b}{2}+\varepsilon} \\
  & \ll T^{20/27+\varepsilon}.
\end{align*}

\section{Proof of Theorem \ref{burger}}
We can reduce Theorem \ref{burger} to Theorem \ref{egham}
by getting rid of the greatest common divisor between $q_1$
and $q_2$ and applying a recursive reduction strategy:
We may clearly assume that
$T$ is an integer, and by allowing for a sufficiently large $O$-constant,
that $T \ge 10$. The proof uses a recursive argument in one case and
in fact amounts to induction on $T$. So fix some integer $T \ge 10$
and suppose
that the result has already been established for all
$\mathcal{A}_{q_1,q_2} (X_1,X_2)
\subset [-W,W]$ for integers $W<T$. Now let $\mathcal{A}_{q_1,q_2}(X_1,X_2)
\subset [-T,T]$ be a proper two dimensional generalized arithmetic progression
not containing any non-zero perfect square.
Write $d=(q_1,q_2)$ and
\[
  \tilde{q}_1=\frac{q_1}{d}, \quad \tilde{q}_2=\frac{q_2}{d}.
\]
Clearly, $(\tilde{q}_1,\tilde{q}_2)=1$.
Recall that without loss of generality we can assume that
$X_1 \ge 1$ and $X_2 \ge 1$.

\bigskip
\noindent
\textbf{Case I:} $d \ll 1$, for a sufficiently large implied absolute
$O$-constant. Then if $X_1 \le d$ or $X_2 \le d$, using $X_2 \ll T^{1/2}$
or $X_1 \ll T^{1/2}$, respectively (see Lemma \ref{laber}), we immediately obtain
$X_1 X_2 \ll T^{1/2}$, whence $|\mathcal{A}_{q_1,q_2}(X_1,X_2)| \ll
T^{1/2}$, which is even better than required. If both $X_1 \ge d$
and $X_2 \ge d$, let
\[
  \tilde{X}_1=\frac{X_1}{d}, \quad \tilde{X}_2=\frac{X_2}{d},
\]
and substitute
\[
  x_1=d a_1, \quad x_2=d a_2
\]
for integers $|a_1| \le \tilde{X}_1, |a_2| \le \tilde{X}_2$. Writing
\[
  x_1 q_1 + x_2 q_2 = d^2 (a_1 \tilde{q}_1 + a_2 \tilde{q}_2),
\]
we see that $\mathcal{A}_{\tilde{q}_1,\tilde{q}_2}
(\tilde{X}_1,\tilde{X}_2) \subset
[-T/d^2, T/d^2]$ contains no non-zero perfect square. Hence, 
as $(\tilde{q}_1, \tilde{q}_2)=1$, by
Theorem \ref{egham} we obtain
\[
  \tilde{X}_1 \tilde{X}_2 \ll T^{20/27+\varepsilon},
\]
whence
\[
  |\mathcal{A}_{q_1,q_2}(X_1,X_2)| \ll X_1 X_2 \ll d^2
  \tilde{X}_1 \tilde{X}_2 \ll T^{20/27+\varepsilon}.
\]
\bigskip
\noindent
\textbf{Case II:} $d \ge T^{1/2}$.\\
If both $X_1 \ge \tilde{q}_2$ and $X_2 \ge \tilde{q}_1$, then
$x_1 \tilde{q}_1 + x_2 \tilde{q}_2$ can take the same value for
different $x_1, x_2$ with $|x_1| \le X_1, |x_2| \le X_2$, contradicting
our assumption of properness. Hence
$X_1 \le \tilde{q}_2$ or $X_2 \le \tilde{q}_1$. In the first case, by
Lemma \ref{abelsch} we obtain $X_2 \ll \frac{T}{q_2}=\frac{T}{d\tilde{q}_2}$, thus
$X_1 X_2 \ll \frac{T}{d} \ll T^{1/2}$. The second case is analogous.
Hence $|\mathcal{A}_{q_1,q_2}(X_1,X_2)| \ll X_1 X_2 \ll T^{1/2}$ which
is even better than required.

\bigskip
\noindent
\textbf{Case III:} $1 \ll d \le T^{1/2}$.
Without loss of generality we may assume that $X_1 \le X_2$. Write
\[
  U = \frac{X_2}{X_1}.
\]
Like in Case I, we will use a reduction argument, eventually
splitting off a square $d^2$ and reducing to an analogous problem
in a shorter interval $[-T/d^2, T/d^2]$: Let $C$ be the centrally symmetric
convex body
\[
  C = \{\mathbf{x} \in \mathbb{R}^2: |x_1| \le U^{-1/2}, |x_2| \le U^{1/2}\}
\]
of volume $\operatorname{vol}(C)=4$. Further, let $\lambda_1 \le \lambda_2$
be the successive minima of $C$ with respect to the lattice
\begin{equation}
\label{lattice}
  L = \{\mathbf{x} \in \mathbb{Z}^2: x_1 \tilde{q}_1 + x_2 \tilde{q}_2
  \equiv 0 \pmod d\}.
\end{equation}
Notice that $(\tilde{q}_1,\tilde{q}_2,d)=1$ as
$(\tilde{q}_1,\tilde{q}_2)=1$,
so $L$ is a two-dimensional lattice of determinant $\det L=d$. Hence,
by Minkowski's second theorem in the geometry of numbers,
\begin{equation}
\label{margaretenbad}
  d \ll \lambda_1 \lambda_2 \ll d.
\end{equation}
Let
\begin{equation}
\label{muecken}
  \tilde{X}_1 = \frac{X_1 U^{1/2}}{2 \lambda_1}, \quad
  \tilde{X}_2 = \frac{X_1 U^{1/2}}{2 \lambda_2}.
\end{equation}
By definition  of $C$ and $\lambda_1,\lambda_2$, there exist
linearly independent
\begin{equation}
\label{eistee}
  \begin{pmatrix} u_1\\ u_2 \end{pmatrix},
  \begin{pmatrix} v_1\\ v_2 \end{pmatrix} \in L
\end{equation}
such that
\begin{eqnarray*}
  |u_1| & \le \lambda_1 U^{-1/2}, \quad |u_2| \le \lambda_1 U^{1/2},\\
  |v_1| & \le \lambda_2 U^{-1/2}, \quad |v_2| \le \lambda_2 U^{1/2}.
\end{eqnarray*}
Now substitute
\begin{equation}
\label{bodensee2}
  \begin{pmatrix} x_1\\x_2 \end{pmatrix} =
  a_1 \begin{pmatrix} u_1\\u_2 \end{pmatrix} +
  a_2 \begin{pmatrix} v_1\\v_2 \end{pmatrix}
\end{equation}
for integers $|a_1| \le \tilde{X}_1, |a_2| \le \tilde{X}_2$, whence
\begin{align*}
  |x_1| & \le |a_1| |u_1| +
  |a_2| |v_1| \\ & \le \frac12 X_1 U^{1/2} \lambda_1^{-1} \lambda_1 U^{-1/2}
  + \frac12 X_1 U^{1/2} \lambda_2^{-1} \lambda_2 U^{-1/2} \le X_1,\\
  |x_2| & \le |a_1| |u_2| + |a_2| |v_2| \\
  & \le \frac12 X_1 U^{1/2} \lambda_1^{-1} \lambda_1 U^{1/2}
  + \frac12 X_1 U^{1/2} \lambda_2^{-1} \lambda_2 U^{1/2} \le UX_1 = X_2.
\end{align*}
Now from \eqref{lattice} and \eqref{eistee} we obtain
\[
  u_1 \tilde{q}_1 + u_2 \tilde{q}_2 = d p_1, \quad
  v_1 \tilde{q}_1 + v_2 \tilde{q}_2 = d p_2
\]
for certain integers $p_1, p_2$. Using the substitution \eqref{bodensee2},
we get
\[
  x_1 q_1 + x_2 q_2 = d ((a_1 u_1 + a_2 v_1) \tilde{q}_1
  + (a_1 u_2 + a_2 v_2) \tilde{q}_2)
  = d^2 (a_1 p_1 + a_2 p_2).
\]
As shown above, for $|a_1| \le \tilde{X}_1$ and $|a_2| \le
\tilde{X}_2$ we have $|x_1| \le X_1$, $|x_2| \le X_2$, whence for
$|a_1| \le \tilde{X}_1, |a_2| \le \tilde{X}_2$ the number
$d^2 (a_1 p_1 + a_2 p_2)$ cannot be a non-zero perfect square, as
$\mathcal{A}_{q_1, q_2}(X_1, X_2)$ does not contain any non-zero perfect
square. Hence for $|\alpha_1| \le \tilde{X}_1, |\alpha_2| \le \tilde{X}_2$
the number $\alpha_1 p_1 + \alpha_2 p_2$ cannot be a non-zero perfect
square. We conclude that
\[
  \mathcal{A}_{p_1, p_2}(\tilde{X}_1, \tilde{X}_2) \subset
  \left[ -\frac{T}{d^2}, \frac{T}{d^2} \right]
\]
is a symmetric two dimensional arithmetic progression not containing
any non-zero perfect square. It is also obvious from the
bijective substitution \eqref{bodensee2} that it is a proper two dimensional
arithmetic progression as $\mathcal{A}_{q_1, q_2}(X_1, X_2)$ is.
Now $d \ge 2$ so $\frac{T}{d^2} \le \frac{T}{4}$, so
\[
  \left[ \frac{T}{d^2} \right] < T.
\]
Hence, by inductive assumption,
\[
  |\mathcal{A}_{p_1, p_2}(\tilde{X}_1, \tilde{X}_2)|
  \ll \left(\frac{T}{d^2} \right)^{20/27
  +\varepsilon};
\]
note that $\frac{T}{d^2} \ge 1$.
From \eqref{margaretenbad} and \eqref{muecken} and
$\mathcal{A}_{p_1, p_2}(\tilde{X}_1, \tilde{X}_2)$ being
proper we obtain
\[
  |\mathcal{A}_{p_1, p_2}(\tilde{X}_1, \tilde{X}_2)| \gg
  \tilde{X}_1 \tilde{X}_2 \gg \frac{X_1 X_2}{d}.
\]
Note that the implied $O$-constant in the previous line is absolute.
Consequently,
\[
  X_1 X_2 \ll d^{-13/27}
  T^{20/27+\varepsilon} \ll T^{20/27+\varepsilon}.
\]
As $X_1 \ge 1$ and $X_2 \ge 1$, it follows that
\[
  |\mathcal{A}_{q_1, q_2}(X_1, X_2)| \ll X_1 X_2
  \ll T^{20/27+\varepsilon}
\]
as required. Due to the inductive form of this argument, one might wonder
whether repeated recursive steps might lead to a blowup of the implied
$O$-constant. However, as $d \gg 1$ for a sufficiently large implied absolute
$O$-constant, the saving provided by $d^{-13/27}$ makes sure
we can keep the same implied $O$-constant throughout this process.

\section{Lower bounds}
Finally, let us also study lower bounds in this problem. The discussion below links this to lower bounds on the least quadratic non-residue and shows that a lower bound of size better than $T^{\frac{1}{2}+ \delta}$ infinitely often, would require completely unexpected lower bounds on quadratic non-residues, as it would contradict Linnik's conjecture that the least quadratic non-residue is of size $O_{\epsilon}(p^{\varepsilon})$.

For prime $p$ write $n(p)$ for the smallest
quadratic non-residue modulo $p$. It is well known that
$n(p)<\sqrt{p}$ for all $p$ and that $n(p)=O(n^{1/(4\sqrt{e})+\varepsilon})$
(see Burgess \cite{Burgess:1957}).
Let $\varepsilon>0$ and $p$ be a sufficiently large prime with
$p \equiv 1 \pmod 4$. Choose $q \in \N$ with $p \le q <2p$ such that
$\left( \frac{q}{p} \right)=-1$ and let $T=2 p^2$, $X_1=p-1$ and
$X_2=n(p)-1$. Then
\[
  \mathcal{A}_{p, q}(X_1, X_2) \subset [-T,T]
\]
does not contain any non-zero square: Suppose that $x_1 p + x_2 q$ is a  non-zero square.
Then $\left( \frac{x_2 q}{p} \right) = 1$. As $\left( \frac{q}{p} \right)
=-1$, this implies that $\left( \frac{x_2}{p} \right)=-1$.
Since $p \equiv 1 \pmod 4$, we obtain $\left( \frac{-x_2}{p} \right)=-1$
as well. Hence, by definition of $n(p)$, we have $|x_2| \ge n(p) > X_2$.
Now all the elements in $\mathcal{A}_{p, q}(X_1, X_2)$ are
distinct, as $x_1 p + x_2 q = y_1 p + y_2 q$ implies that
$p \mid (x_2-y_2) q$, and therefore, as $(p,q)=1$,
$p \mid x_2-y_2$, which by virtue of $|x_2|, |y_2| \le X_2
<n(p)<\sqrt{p}$ implies that $x_2=y_2$ and thus $x_1=y_1$.
Hence
\[
  |\mathcal{A}_{p, q}(X_1,X_2)| \gg p n(p) \gg T^{1/2} n(p).
\]
Following the argument of Sali\'{e} (see \cite{Salie}, \cite{F} and see also \cite{GR} for a slightly
stronger result), it is easy to see that
there are infinitely many primes $p \equiv 1 \pmod 4$ with
$n(p) \gg \log p$. Hence there is a sequence of $T_i \rightarrow \infty$
and two-dimensional progressions $\mathcal{A}_{q_1^{(i)}, q_2^{(i)}}(
X_1^{(i)}, X_2^{(i)}) \in [-T_i,T_i]$ not containing
any non-zero square and
\[
  |\mathcal{A}_{q_1^{(i)}, q_2^{(i)}}(X_1^{(i)}, X_2^{(i)})| \gg T_i^{1/2}
  \log T_i.
\]

Moreover, if $n(p)\gg_{\alpha} p^{\alpha}$ for infinitely many primes $p\equiv 1 \bmod 4$ (in contradiction to Linnik's conjecture), then
\[
  |\mathcal{A}_{q_1, q_2}(X_1, X_2)| \gg_{\alpha} T^{\frac{1+\alpha}{2}}
\]
infinitely often. In particular, any improvement of Theorem \ref{egham} below
\[
  |\mathcal{A}_{q_1, q_2}(X_1, X_2)| \ll T^{\frac{1}{2}+1/(8 \sqrt{e})+
  \varepsilon}
\]
would imply improving the best known upper bound for the least quadratic
non-residue, at least for primes $p \equiv 1 \pmod 4$. Note that 
$\frac{1}{2}+\frac{1}{8 \sqrt{e}}\approx 0.5758$.

\section*{Acknowledgments} 
Both authors want to thank the Department of Mathematics at Royal Holloway and the Institut
f\"ur Analysis and Zahlentheorie at the TU Graz for favourable
working conditions, and they want to thank the referee for carefully reading this manuscript.


\end{document}